\newtheorem{theorem}{\sc Theorem}[section]
\newtheorem{lemma}[theorem]{\sc Lemma}
\newtheorem{proposition}[theorem]{\sc Proposition}
\begin{document}
\title[restricted centralizers of $\pi$-elements]{Profinite groups with restricted centralizers of $\pi$-elements}
\author{Cristina Acciarri}

\address{Cristina Acciarri: Dipartimento di Scienze Fisiche, Informatiche e Matematiche, Universit\`a degli Studi di Modena e Reggio Emilia, Via Campi 213/b, I-41125 Modena, Italy}
\email{cristina.acciarri@unimore.it}

\author{Pavel Shumyatsky }
\address{ Pavel Shumyatsky: Department of Mathematics, University of Brasilia,
Brasilia-DF, 70910-900 Brazil}
\email{pavel@unb.br}
\thanks{This research was supported by the Conselho Nacional de Desenvolvimento Cient\'{\i}fico e Tecnol\'ogico (CNPq),  and Funda\c c\~ao de Apoio \`a Pesquisa do Distrito Federal (FAPDF), Brazil.}
\keywords{Profinite groups, centralizers, $\pi$-elements, FC-groups}
\subjclass[2010]{20E18,  20F24}
\begin{abstract} 
A group $G$ is said to have restricted centralizers if for each $g$ in $G$ the centralizer $C_G(g)$ either is finite or has finite index in $G$. Shalev showed that a profinite group with restricted centralizers is virtually abelian. Given a set of primes $\pi$, we take interest in profinite groups with restricted centralizers of $\pi$-elements. It is shown that such a profinite group has an open subgroup of the form $P\times Q$, where $P$ is an abelian pro-$\pi$ subgroup and $Q$ is a pro-$\pi'$ subgroup. This significantly strengthens a result from our earlier paper.

\end{abstract}

\maketitle

\section{Introduction}

A group $G$ is said to have restricted centralizers if for each $g$ in $G$ the centralizer $C_G(g)$ either is finite or has finite index in $G$. This notion was introduced by Shalev in \cite{shalev} where he showed that a profinite group with restricted centralizers is virtually abelian. We say that a profinite group has a property virtually if it has an open subgroup with that property. The article \cite{dms2} handles profinite groups with restricted centralizers of $w$-values for a multilinear commutator word $w$. The theorem proved in \cite{dms2} says that if $w$ is a multilinear commutator word and $G$ is a profinite group in which the centralizer of any $w$-value is either finite or open, then the verbal subgroup $w(G)$ is virtually abelian.  In \cite{AS}  we study profinite groups in which $p$-elements have restricted centralizers,  that is, groups in which $C_G(x)$ is either  finite or  open for any $p$-element $x$. The following theorem was proved.

\begin{theorem} \label{restricted}
Let $p$ be a prime and $G$ a profinite group in which the centralizer of each $p$-element is either finite or open. Then $G$ has a normal abelian pro-$p$ subgroup $N$ such that $G/N$ is virtually pro-$p'$.
\end{theorem}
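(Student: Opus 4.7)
The plan is to analyze a Sylow pro-$p$ subgroup of $G$, prove it is virtually abelian by reducing to Shalev's theorem, and then promote an open abelian subgroup to a normal abelian pro-$p$ subgroup of $G$ by exploiting the FC-property of the $p$-elements.

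First I would fix a Sylow pro-$p$ subgroup $P$ of $G$ and note that the hypothesis descends to $P$: for every $x \in P$, the centralizer $C_P(x) = C_G(x) \cap P$ is finite if $C_G(x)$ is finite, and open in $P$ if $C_G(x)$ is open in $G$. Thus $P$ is a profinite group with restricted centralizers in Shalev's sense, and by Shalev's theorem \cite{shalev} it is virtually abelian. Taking the normal core in $P$ of an open abelian subgroup yields an open normal abelian subgroup $A$ of $P$. If $A$ is finite, then $P$ is finite and the theorem follows directly; otherwise, for every $a \in A$ the centralizer $C_G(a)$ contains the infinite abelian subgroup $A$ and so is open by hypothesis, which places $A$ inside the FC-center of $G$.

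The heart of the argument is to extract from $A$ a normal abelian pro-$p$ subgroup $N$ of $G$ that is still open in $P$. The strategy is to show that $C_G(A)$ is open in $G$. Granted this, the normalizer $N_G(A) \supseteq C_G(A)$ is also open, so $A$ has only finitely many $G$-conjugates $A^{(1)}, \ldots, A^{(k)}$; their intersection $N = \bigcap_i A^{(i)}$ is then a normal abelian pro-$p$ subgroup of $G$ of finite index in $A$, hence open in $P$. Consequently $P/N$ is finite, so the Sylow pro-$p$ subgroup of $G/N$ is finite, and one deduces that $G/N$ is virtually pro-$p'$ by analysing the Sylow structure of $G/N$ together with the inherited restricted-centralizer behaviour of its $p$-elements.

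The principal obstacle is establishing that $C_G(A) = \bigcap_{a \in A} C_G(a)$ is open, since $A$ need not be topologically finitely generated and so this is a priori only a closed intersection of uncountably many open centralizers. I would attack this by a Baire-category argument inside $A$: writing $A = \bigcup_n F_n$ with $F_n = \{a \in A : [G:C_G(a)] \le n\}$ closed, some $F_n$ has nonempty interior, and after a translation one obtains an open subgroup $W$ of $A$ on which $[G:C_G(w)]$ is uniformly bounded. A BFC-type analysis of the $G$-action on $W$ (via Neumann's theorem on bounded conjugacy classes, applied to appropriate finite quotients of $G$) should then yield that $C_G(W)$ is open in $G$, and replacing $A$ by the normal core of $W$ in $P$ (still abelian, normal in $P$, and open in $P$) completes the argument.
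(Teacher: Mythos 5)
First, a point of reference: the paper does not prove Theorem \ref{restricted} here at all --- it is quoted from \cite{AS}, where it is the main application of the ``stronger BFC-theorem'', and in the present paper it is subsumed by Theorem \ref{main2}. So your proposal has to be judged on its own merits against the toolkit the paper uses (Baire category, Proposition \ref{rDS}, and inverse-limit arguments), and with that toolkit in mind it has one critical gap. Your reductions up to the Baire-category step are fine: the hypothesis does descend to a Sylow pro-$p$ subgroup $P$, Shalev's theorem does give an open normal abelian $A\leq P$, the sets $F_n$ are closed, and the translation trick does produce an open $W\leq A$ with $|w^G|$ uniformly bounded. But the decisive assertion --- that ``a BFC-type analysis via Neumann's theorem applied to finite quotients'' yields that $C_G(W)$ is open --- is exactly the content of the theorem, and it does not follow the way you suggest. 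In a finite extraspecial group $E$ of order $p^{2m+1}$ with $W$ a maximal abelian subgroup, every element $w$ satisfies $|w^E|\leq p$ (since $[E,w]\leq Z(E)$ has order $p$), yet $[E:C_E(W)]=[E:W]=p^{m}$ is unbounded. Hence no bound on $[G:C_G(W)]$ can be extracted quotient by quotient, and Neumann's theorem alone cannot make $C_G(W)$ open. What is actually available (Proposition \ref{rDS}, i.e.\ \cite{DS}, together with an inverse-limit argument) is the weaker statement that there exist $T\trianglelefteq G$ and $B\leq W$ of finite index with $[T,B]$ finite; one must then use a genuinely profinite continuity argument --- the commutator map from $C_T([T,B])\times C_B([T,B])$ into the finite discrete group $[T,B]$ is continuous, hence trivial on a product of open subgroups --- to upgrade ``$[T,B]$ finite'' to ``$[T_0,B_0]=1$'' for open $T_0,B_0$, and only then does $C_G(W)$ become open. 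This step is the heart of the proof and is missing.

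A secondary, repairable leap: even granting that $C_G(A)$ is open, your $N=\bigcap_i A^{(i)}$ is not obviously of finite index in $A$, since the conjugates $A^{(i)}$ lie in different Sylow pro-$p$ subgroups and could a priori meet $A$ in a small subgroup. A clean fix is to set $M_0=\bigcap_{g\in G}C_G(A)^g$ (open normal, a finite intersection), note that $A\cap M_0\leq Z(M_0)$, and take for $N$ the $p$-component of the normal abelian subgroup $Z(M_0)$; then $N$ is a normal abelian pro-$p$ subgroup of $G$ containing the subgroup $A\cap M_0$, which is open in $P$. Your final step is correct but understated: a profinite group whose Sylow pro-$p$ subgroup is finite of order $p^k$ is automatically virtually pro-$p'$ (choose an open normal $M$ with $|G/M|_p=p^k$ already attained; then every smaller open normal subgroup forces $M$ to be pro-$p'$), with no further use of the restricted-centralizer hypothesis needed.
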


The present paper grew out of our desire to determine whether this result can be extended to profinite groups in which the centralizer of each $\pi$-element, where $\pi$ is a fixed set of primes, is either finite or open. As usual, we say that an element $x$ of a profinite group $G$ is a $\pi$-element if the order of the image of $x$ in every finite continuous homomorphic image of $G$ is divisible only by primes in $\pi$ (see \cite[Section 2.3]{rz} for a formal definition of the order of a profinite group). 

It turned out that the techniques used in the proof of Theorem \ref{restricted} were not quite adequate for handling the case of $\pi$-elements. The basic difficulty stems from the fact that (pro)finite groups in general do not possess Hall $\pi$-subgroups.

In the present paper we develop some new techniques and establish the following theorem about finite groups.

If $\pi$ is a set of primes and $G$ a finite group, write $O^{\pi'}(G)$ for the unique smallest normal subgroup $M$ of $G$ such that $G/M$ is a $\pi'$-group. The conjugacy class containing an element $g\in G$ is denoted by $g^G$.

\begin{theorem}\label{main1} Let $n$ be a positive integer, $\pi$ be a set of primes, and $G$ a finite group such that $|g^G|\leq n$ for each $\pi$-element $g\in G$. Let $H=O^{\pi'}(G)$. Then $G$ has a normal subgroup $N$ such that
\begin{enumerate}
\item The index $[G:N]$ is $n$-bounded;
\item $[H,N]=[H,H]$;
\item The order of $[H,N]$ is $n$-bounded.
\end{enumerate}
\end{theorem}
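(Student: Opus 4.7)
The plan is to proceed in two stages: first, bound the order of $[H,H]$; second, find a normal subgroup $N$ of $n$-bounded index containing $H$ that centralizes the abelian quotient $A = H/[H,H]$. Together these will yield all three conclusions.

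Two preliminary observations set up the argument. The subgroup $H = O^{\pi'}(G)$ coincides with the subgroup generated by all $\pi$-elements of $G$: since $G/H$ is a $\pi'$-group, every $\pi$-element lies in $H$; and the subgroup generated by the $\pi$-elements is itself normal with $\pi'$-quotient, hence contains $H$. Moreover, the abelian quotient $A$ is generated by images of $\pi$-elements, so $A$ is itself a $\pi$-group; in particular every element of $A$ is a $\pi$-element of $G/[H,H]$ with $G$-orbit of size at most $n$.

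For the first stage, I would bound $|[H,H]|$ using a BFC-style theorem: since $H$ is generated by a normal subset of elements each having conjugacy class of size at most $n$ in $H$, the derived subgroup $[H,H]$ has order bounded by a function of $n$ alone.

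For the second stage, set $N = C_G(A)$. Then $H \leq N$ because $H$ centralizes its own abelian quotient. This gives $[H,N] \subseteq [H,H]$ by the definition of $N$, while $H \leq N$ yields the reverse inclusion, so $[H,N] = [H,H]$ (condition (2)); condition (3) follows as $|[H,N]| = |[H,H]|$ is $n$-bounded. The remaining task is to bound $[G:N]$, equivalently the order of the faithful action of $G/N$ on $A$. The key lemma—where the paper's new techniques are presumably concentrated—states that a finite group acting faithfully on an abelian group with all orbits of size at most $n$ has $n$-bounded order. This lemma will be the main obstacle: each element of $G/N$ acts with order dividing $n!$, so $G/N$ has $n$-bounded exponent, but extracting a bound on its order requires a subtle argument exploiting the interaction between the orbit constraint and the $\pi$-structure of $A$.
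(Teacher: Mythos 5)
Your two-stage outline does mirror the paper's architecture (bound $|[H,H]|$ first, then handle the abelian quotient), and your reduction in the second stage --- taking $N$ to be the preimage of the centralizer of $A=H/[H,H]$, so that conclusions (2) and (3) follow once $[G:N]$ is bounded --- is sound as far as it goes. The problem is that both stages are made to rest on auxiliary statements that you do not prove and that are in fact false in the generality in which you state them.

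For the first stage, there is no ``BFC-style theorem'' asserting that a group generated by a normal subset of elements with conjugacy classes of size at most $n$ has derived subgroup of $n$-bounded order. Take $G=E_1\times\dots\times E_m$ with each $E_i$ extraspecial of order $p^3$, and let $X$ be the set of tuples having at most one coordinate outside the corresponding centre $Z(E_i)$. Then $X$ is a normal subset, $|x^G|\le p$ for every $x\in X$, and $\langle X\rangle=G$, yet $|G'|=p^m$ is unbounded. Bounding $|[H,H]|$ therefore cannot be outsourced to a generic BFC statement; it is the technical heart of the paper (Lemma \ref{derivedbounded}), and its proof exploits closure properties specific to the set of $\pi$-elements --- crucially, that if $a$ and $b$ are $\pi$-elements with $[b,a]$ central in $H$, then $\langle a,b\rangle$ is nilpotent and $ba$ is again a $\pi$-element --- together with a Wiegold-type lemma and a choice of element of maximal class size. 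For the second stage, your ``key lemma'' is also false as stated: let $A=V_0\oplus W$, where $V_0$ has order $p$ and $W$ is elementary abelian of rank $m$, and let $F$ be the group of all homomorphisms $\phi$ from $W$ to $V_0$, acting on $A$ by $(c,w)\mapsto (c+\phi(w),w)$. This action is faithful, every orbit has size at most $p$, and $|F|=p^m$ is unbounded. What saves the actual situation is that $G/C_G(A)$ is a $\pi'$-group acting coprimely on the abelian $\pi$-group $A$; the coprime version of your lemma is true, but it is essentially equivalent to the abelian case of the theorem itself, and the paper proves it not by orbit counting but by combining the probabilistic result of Detomi and Shumyatsky (Proposition \ref{rDS}, applied via $Pr(H,G)\ge 1/n$) with the Schur--Zassenhaus theorem and standard facts on coprime actions. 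So the two places where you defer to an unnamed lemma are exactly the two places where all the work lies, and neither lemma is available off the shelf.
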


Throughout the article we use the expression “$(a, b,\ldots)$-bounded” to mean that a quantity is finite and bounded by a certain number depending only on the parameters $a,b,\ldots$.

The proof of Theorem \ref{main1} uses some new results related to Neumann's BFC-theorem \cite{bhn}. In particular, an important role in the proof is played by a recent probabilistic result from \cite{DS}. Theorem \ref{main1} provides a highly effective tool for handling profinite groups with restricted centralizers of $\pi$-elements. Surprisingly, the obtained  result is much stronger than Theorem \ref{restricted} even in the case where $\pi$ consists of a single prime.

\begin{theorem}\label{main2} Let $\pi$ be a set of primes and $G$ a profinite group in which the centralizer of each $\pi$-element is either finite or open. Then $G$ has an open subgroup of the form $P\times Q$, where $P$ is an abelian pro-$\pi$ subgroup and $Q$ is a pro-$\pi'$ subgroup.
\end{theorem}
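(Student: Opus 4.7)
The plan is to derive Theorem~\ref{main2} from Theorem~\ref{main1} by applying the latter to finite continuous quotients of $G$ and then passing to an inverse limit.

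The most delicate step is to establish a uniform bound on conjugacy class sizes: a positive integer $n$ such that $|g^G|\leq n$ for every $\pi$-element $g$ of $G$ whose centralizer $C_G(g)$ is open. The set $X$ of $\pi$-elements of $G$ is closed (since being a $\pi$-element is preserved under limits in a profinite group), and for each $n$ the subset $X_n = \{g\in X : |g^G|\leq n\}$ is closed by lower semicontinuity of $|g^G|$ in $g$. Thus the set of $\pi$-elements with open centralizer is the $F_\sigma$ set $\bigcup_n X_n$, while its complement consists of $\pi$-elements with finite centralizer (necessarily of finite order, since $g\in C_G(g)$). A Baire category argument applied to the compact Baire space $X$, combined with the fact that the hypothesis passes to any open subgroup of $G$, is expected to yield this uniform bound.

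Having secured such a bound, I would pass to each finite continuous quotient $G/U$ for $U$ an open normal subgroup of $G$. Every $\pi$-element of $G/U$ lifts to a $\pi$-element of $G$ via the standard decomposition of procyclic subgroups into $p$-parts (replacing a representative $x$ by $\prod_{p\in\pi}x_p$ within its coset modulo $U$), so the bound transfers to $G/U$. Theorem~\ref{main1} then produces a normal subgroup $N_U/U$ of $G/U$ of $n$-bounded index such that $[H_U,N_U/U]$ has $n$-bounded order, where $H_U = O^{\pi'}(G)U/U$. A standard profinite compactness argument (using K\"onig's lemma on the inverse system of the finitely many normal subgroups of $G/U$ of index $\leq f(n)$) yields a single open normal subgroup $N$ of $G$ of $n$-bounded index such that $L := [H,N]$ is finite, where $H = O^{\pi'}(G)$.

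To extract the $P\times Q$ structure, observe that $(H\cap N)/L$ is central in $N/L$, since $[H\cap N,N]\leq [H,N]=L$. Its pro-$\pi$ part $\bar P$ is therefore a central abelian pro-$\pi$ subgroup of $N/L$, and $(N/L)/\bar P$ is pro-$\pi'$ (since both $(H\cap N)/L$ modulo $\bar P$, and $N/(H\cap N)\hookrightarrow G/H$, are pro-$\pi'$). By the profinite Schur--Zassenhaus theorem applied to this central extension, $N/L$ splits as $\bar P\times\bar Q$ with $\bar Q$ pro-$\pi'$. Choosing an open normal subgroup $V$ of $G$ with $V\cap L=\{1\}$ (which exists since $L$ is finite), the open subgroup $N'=N\cap V$ of $G$ injects into $N/L$ and inherits the direct product decomposition $N'=(N'\cap\bar P)\times(N'\cap\bar Q)=P\times Q$, as required. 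The main obstacle is the uniform bound in the first step: $\pi$-elements with finite centralizer in $G$ can have arbitrarily large conjugacy classes in the finite quotients $G/U$, so their contribution must be controlled before Theorem~\ref{main1} can be applied with a single bound $n$ in every $G/U$.
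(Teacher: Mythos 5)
There is a genuine gap, and you have in fact named it yourself without closing it. Your proposal only addresses the case in which every $\pi$-element has an open centralizer, and even there the key step is missing. The Baire category argument on $X=\bigcup_n X_n$ gives you a $k$, an element $a\in X$ and an open normal subgroup $M$ such that $|x^G|\leq k$ for all $x\in X\cap aM$ --- that is, a bound on a \emph{coset} of $M$, not on $X\cap M$ itself, and ``the hypothesis passes to open subgroups'' does not translate one into the other. The paper bridges this by observing that $\langle a^G\rangle$ has finite commutator subgroup $T$ (Schur), that for $x\in X\cap M$ the product $ax$ is a $\pi$-element modulo $T$ (it need not be a $\pi$-element of $G$, which is a real subtlety), and hence $ax=ty$ with $t\in T$, $y\in X\cap aM$, giving $|x^G|\leq k^2|T|$. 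Without some such translation your ``uniform bound'' step does not go through. (Your reduction to finite quotients, the lifting of $\pi$-elements via $p$-parts, the inverse-limit/compactness step, and the final extraction of $P\times Q$ by intersecting with an open $V$ avoiding the finite subgroup $[H,N]$ are all fine and essentially match the paper.)

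The larger omission is the case where some $\pi$-element has \emph{finite} centralizer, which you flag as ``the main obstacle'' and then leave unresolved; this is where most of the paper's work lies and it cannot be absorbed into the Baire argument, since then $X\neq\bigcup_n X_n$ and the category argument gives no conclusion. The paper splits further: if some $\pi$-element $b$ has infinite order then $C_G(b)$ is open and every $\pi$-element of $C_G(b)$ has open centralizer, reducing to the first case inside $C_G(b)$; otherwise all $\pi$-elements are torsion and there is a $p$-element $d$ ($p\in\pi$) with finite centralizer, whence a Sylow $p$-subgroup containing $d$ is locally nilpotent by Zelmanov's theorem and finite by Lemma \ref{lemmaij}, $d$ acts as a coprime fixed-point-free automorphism on an open normal pro-$p'$ subgroup $L$ forcing $L$ to be prosoluble (this uses CFSG), and one then works with a Hall $\pi$-subgroup of $L$ and Shalev's theorem to reduce again either to the virtually pro-$\pi'$ case or to the first case. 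None of this machinery appears in your proposal, so as it stands the argument proves at best a special case of the theorem.
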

Thus, the improvement over Theorem \ref{restricted} is twofold -- the result now covers the case of $\pi$-elements and provides additional details clarifying the structure of groups in question. Furthermore, it is easy to see that Theorem \ref{main2} extends  Shalev's result \cite{shalev} which can be recovered by considering the case where $\pi=\pi(G)$ is the set of all prime divisors of the order of $G$.

We now have several results showing that if the elements of a certain subset $X$ of a profinite group $G$ have restricted centralizers, then the structure of $G$ is very special. This suggests the general line of research whose aim would be to determine which subsets of $G$ have the above property. At present we are not able to provide any insight on the problem. Perhaps one might start with the following question:
\medskip

\noindent {\it Let $n$ be a positive integer. What can be said about a profinite group $G$ such that if $x\in G$ then $C_G(x^n)$ is either finite or open?}
\medskip

Proofs of Theorems \ref{main1} and \ref{main2} will be given in Sections 2 and 3, respectively.

\section{Proof of Theorem \ref{main1}}

The following lemma is taken from \cite{AS}. If $X\subseteq G$ is a subset of a group $G$, we write $\langle X\rangle$ for the subgroup generated by $X$ and $\langle X^G\rangle$ for the minimal normal subgroup of $G$ containing $X$.

\begin{lemma}\label{dits}
Let $i,j$ be  positive integers and $G$ a group having a subgroup $K$ such that $|x^G|\leq i$ for each $x\in K$. Suppose that $|K|\leq j$. Then $\langle K^G\rangle$ has finite $(i,j)$-bounded order. 
\end{lemma}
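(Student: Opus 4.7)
The plan is to bound $|H|$, where $H=\langle K^G\rangle$, by separately controlling the abelianization $H/H'$ and the commutator subgroup $H'$, after first locating a large central subgroup of $H$.

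First I would count the generators of $H$. The set $K^G=\bigcup_{x\in K}x^G$ has cardinality at most $ij$, since $|K|\le j$ and each conjugacy class $x^G$ has at most $i$ elements. Every element of $K^G$ is $G$-conjugate to some element of $K$, hence has order at most $|K|\le j$. So $H$ is generated by a set of at most $ij$ elements, each of order at most $j$.

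Next I would show that $Z(H)$ has bounded index in $H$. Set $N_0=\bigcap_{y\in K^G}C_G(y)$, i.e.\ the kernel of the conjugation action of $G$ on the finite set $K^G$. Each centralizer $C_G(y)$ has index at most $i$ in $G$ (since $y$ is $G$-conjugate to some $x\in K$, and $C_G(y)$ is conjugate to $C_G(x)$), and $|K^G|\le ij$, so $[G:N_0]\le i^{ij}$. Because $N_0$ centralizes every generator of $H$, it centralizes $H$; hence $N_0\cap H\subseteq Z(H)$ and $[H:Z(H)]\le i^{ij}$, which is $(i,j)$-bounded. This is the main step, and the place where the hypothesis that $K$ is a \emph{subgroup} of bounded order is used in full.

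The rest is then immediate. By Schur's theorem, finiteness of $[H:Z(H)]$ implies that $|H'|$ is bounded by a function of $[H:Z(H)]$, so $|H'|$ is $(i,j)$-bounded. On the other hand $H/H'$ is abelian and generated by the images of at most $ij$ elements of order at most $j$, so $|H/H'|\le j^{ij}$. Combining these two bounds yields $|H|=|H/H'|\cdot|H'|$ is $(i,j)$-bounded, as required. I do not expect any serious obstacle: the only nontrivial ingredient is Schur's theorem, and everything else reduces to direct counting.
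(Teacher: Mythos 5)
Your argument is correct. Note that the paper itself does not prove this lemma --- it is imported from \cite{AS} --- so there is no in-paper proof to match; but your proof is a valid self-contained one, and each step checks out: $K^G$ has at most $ij$ elements, all of order at most $j$ by Lagrange; the kernel $N_0$ of the conjugation action on $K^G$ has index at most $i^{ij}$ and centralizes $H=\langle K^G\rangle$, so $[H:Z(H)]\leq i^{ij}$; the quantitative form of Schur's theorem (which the paper itself invokes repeatedly in exactly this way) bounds $|H'|$, and the abelianization is bounded by $j^{ij}$. The identification of where the subgroup hypothesis enters (bounding element orders via Lagrange, without which $H/H'$ could be infinite) is also accurate. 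For comparison, there is an even shorter route that bypasses Schur entirely: $K^G$ is a finite normal subset of $G$ consisting of at most $ij$ elements of order at most $j$, so the quantitative version of Dicman's lemma directly gives that $\langle K^G\rangle$ is finite of $(i,j)$-bounded order; your centre-plus-Schur argument costs a little more machinery but uses only facts already deployed elsewhere in the paper.
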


If $K$ is a subgroup of a finite group $G$, we denote by 
$$Pr(K,G)=\frac{|\{(x,y) \in K\times G : [x,y]=1\}|}{|K||G|}$$
the relative commutativity degree of $K$ in $G$, that is, the probability that a random element of $G$ commutes with a random element of $K$. Note that 
$$Pr(K,G)=\frac{\sum_{x\in K}|C_G(x)|}{|K||G|}.$$
It follows that if $|x^G|\leq n$ for each $x\in K$, then $Pr(K,G)\geq\frac{1}{n}$.

The next result was obtained in \cite[Proposition 1.2]{DS}. In the case where $K=G$ this is a well known theorem, due to P. M. Neumann \cite{pneu}.

\begin{proposition}\label{rDS}
Let $\epsilon>0$, and let $G$ be a finite group having a subgroup $K$ such that $Pr(K,G)\geq \epsilon$. Then  there is a normal subgroup $T\leq G$ and a subgroup $B\leq K$ such that the indexes $[G:T]$ and $[K:B]$, and the order of the commutator subgroup $[T,B]$  are $\epsilon$-bounded.
\end{proposition}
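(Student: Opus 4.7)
The plan is to view Proposition~\ref{rDS} as a relative version of Neumann's BFC-theorem (recovered when $K=G$) and to mimic the classical argument in three steps: averaging, structural extraction via BFC, and commutator bounding.

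Step 1 (averaging). From $\epsilon \leq Pr(K,G) = \frac{1}{|K||G|}\sum_{x\in K}|C_G(x)|$, Markov's inequality applied to the nonnegative function $x\mapsto |C_G(x)|/|G|$ produces a subset
$$K_0 = \{x\in K : |x^G|\leq n\} \subseteq K$$
of density $|K_0|/|K|\geq \epsilon/2$, where $n=\lceil 2/\epsilon\rceil$. Since $K$ is a subgroup, setting $B := \langle K_0\rangle \leq K$ already gives $[K:B]\leq |K|/|K_0|\leq 2/\epsilon$, so the bound on $[K:B]$ comes for free.

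Step 2 (structure). The $G$-invariant union $X = \bigcup_{x\in K_0}x^G$ lies in the bounded-FC layer $\{g\in G:|g^G|\leq n\}$, so $\langle X\rangle$ is an FC-group with conjugacy classes in $G$ of size at most $n$. By Neumann's classical BFC-theorem (\cite{bhn}), the commutator subgroup $W = [\langle X\rangle,\langle X\rangle]$ has $n$-bounded order, and $\langle X\rangle/W$ is abelian. Define $T\trianglelefteq G$ as the kernel of the conjugation action of $G$ on $\langle X\rangle / W$. To bound $[G:T]$ purely in terms of $\epsilon$ one invokes Neumann's irredundant-covering lemma on the partition $X = \bigsqcup x^G$, replacing $X$ by an $\epsilon$-boundedly large family of orbit representatives whose cores-of-centralizers already cut out $T$; Lemma~\ref{dits} applied to each $[G,x]^G$ controls the $G$-orbit structure after this reduction.

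Step 3 (bounding $[T,B]$). By construction $B = \langle K_0\rangle \leq \langle X\rangle$ and $T$ centralizes $\langle X\rangle$ modulo $W$, so $[T,b]\leq W$ for every $b\in B$, giving $[T,B]\leq W$ of $\epsilon$-bounded order. The main obstacle is Step~2: the abelian quotient $\langle X\rangle/W$ can itself have unbounded order, so the naive kernel $\bigcap_{x\in K_0} C_G(x)$ need not have bounded index in $G$, and one must collapse the picture to a bounded number of orbit representatives before taking centralizers. This reduction—absent in the classical $K=G$ case, where every element of $G$ already has bounded conjugacy class and the covering structure is self-referential—is the step that genuinely exploits the asymmetry between $K$ and $G$ in the hypothesis $Pr(K,G)\geq\epsilon$.
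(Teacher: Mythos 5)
First, note that the paper does not actually prove Proposition~\ref{rDS}: it is quoted verbatim from \cite[Proposition 1.2]{DS}, so there is no internal argument to compare yours against, and the burden of a from-scratch proof is entirely on you. Your Step 1 is correct (the averaging argument gives $|K_0|\geq(\epsilon/2)|K|$ for $n=\lceil 2/\epsilon\rceil$, hence $[K:\langle K_0\rangle]\leq 2/\epsilon$), but Steps 2 and 3 contain two fatal gaps. The central claim of Step 2 --- that $\langle X\rangle$ is a BFC-group with $G$-classes of size at most $n$, so that Neumann's theorem \cite{bhn} bounds $W=[\langle X\rangle,\langle X\rangle]$ --- is unjustified and false in general: only the \emph{generators} of $\langle X\rangle$ have classes of size at most $n$, products of such elements can have arbitrarily large classes, and the derived subgroup of the subgroup generated by a normal set of bounded-class elements is not $n$-bounded. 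For instance, in $G=D_8\times\cdots\times D_8$ ($k$ factors) the elements with at most one non-central coordinate form a normal subset of elements with $|x^G|\leq 2$ that generates $G$, yet $|G'|=2^k$. (That example violates $Pr(K,G)\geq\epsilon$ for large $k$, which is exactly the point: after Step 1 you discard the density hypothesis and retain only the class-size bound, and that residual information is provably insufficient. Bounding $W$ genuinely requires using the density of $K_0$ in $K$ throughout; this is the content of the ``stronger form of the BFC-theorem'' machinery of \cite{AS} and \cite{DS}, and it cannot be outsourced to the classical theorem of \cite{bhn}.)

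The second gap is the bound on $[G:T]$. You define $T$ as the kernel of the $G$-action on $\langle X\rangle/W$, essentially $\bigcap_{x\in X}C_G(xW)$, and then appeal to an ``irredundant-covering lemma'' to replace $X$ by ``an $\epsilon$-boundedly large family of orbit representatives.'' No such family exists: the number of $G$-conjugacy classes contained in $X$ is at least $|X|/n$ and is unbounded, so intersecting centralizers over boundedly many representatives does not cut out $T$, while intersecting over all of $X$ gives no a priori index bound. Lemma~\ref{dits} cannot repair this, since it takes as input a subgroup of bounded \emph{order}, which you have not produced. Your own closing paragraph correctly identifies this reduction as ``the main obstacle,'' but the proposal supplies only a name for the missing step, not an argument. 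As it stands, neither the boundedness of $|W|$ nor that of $[G:T]$ is established, so the proof does not go through.
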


We will now embark on the proof of Theorem \ref{main1}.

Assume the hypothesis of Theorem \ref{main1}. Let $X$ be the set of all $\pi$-elements of $G$. Clearly, $H=\langle X\rangle$. Given an element $g\in H$, we write $l(g)$ for the minimal number $l$ with the property that $g$ can be written as a product of $l$ elements of $X$. The  following result is straightforward from \cite[Lemma 2.1]{dieshu}.

\begin{lemma}\label{21} Let $K\leq H$ be a subgroup of index $m$ in $H$, and let $b\in H$. Then the coset $Kb$ contains an element $g$ such that $l(g)\leq m-1$.
\end{lemma}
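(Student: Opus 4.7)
The plan is to filter $H$ by the $X$-length function $l(\cdot)$ and apply a pigeonhole argument in the coset space $K\backslash H$. Since $X$ consists of $\pi$-elements, it is closed under inversion (as $\langle x\rangle = \langle x^{-1}\rangle$), so every element of $H = \langle X\rangle$ is a product of finitely many elements of $X$. Consequently, the sets $W_i := \{g \in H : l(g) \leq i\}$ form an ascending chain with $W_0 = \{1\}$, $W_{i+1} \subseteq W_i \cup W_i X$, and $\bigcup_i W_i = H$. Let $\bar W_i$ denote the image of $W_i$ in the left-coset space $K\backslash H$, which has $m$ elements. It suffices to prove $\bar W_{m-1} = K\backslash H$, for then the coset $Kb$ contains some $g \in W_{m-1}$, i.e.\ an element with $l(g) \leq m-1$.

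The crucial observation is that the ascending chain $\bar W_0 \subseteq \bar W_1 \subseteq \cdots$ stabilizes the moment two consecutive terms coincide. Suppose $\bar W_i = \bar W_{i-1}$ for some $i \geq 1$; I claim $\bar W_j = \bar W_i$ for all $j \geq i$, and argue by induction on $j$. The base case $j=i$ is trivial. For the step, assume $\bar W_j = \bar W_i$ and take any $v \in W_{j+1}$. If $v \in W_j$ then $Kv \in \bar W_j = \bar W_i$; otherwise write $v = wx$ with $w \in W_j$ and $x \in X$. Then $Kw \in \bar W_j = \bar W_i = \bar W_{i-1}$, so there exists $w' \in W_{i-1}$ with $Kw = Kw'$. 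Consequently $Kv = K(w'x)$ with $w'x \in W_i$, showing $Kv \in \bar W_i$ as required.

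Now $\bigcup_j \bar W_j = K\backslash H$ has exactly $m$ elements, so the chain $(\bar W_j)$ must eventually exhaust it; the stabilization property then forces $|\bar W_j|$ to strictly increase from $|\bar W_0| = 1$ at every step until it reaches $m$. In particular $|\bar W_{m-1}| = m$, yielding the required element of $Kb$ of $X$-length at most $m-1$. The argument is purely combinatorial and presents no genuine obstacle; one only needs to work consistently with the left-coset space $K\backslash H$ and to invoke the closure of $X$ under inversion to ensure $\bigcup_i W_i = H$.
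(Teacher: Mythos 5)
Your proof is correct. The paper gives no argument of its own here, simply quoting the statement from \cite[Lemma 2.1]{dieshu}; your stabilizing-chain pigeonhole argument on the coset space $K\backslash H$ (using that $X$ is closed under inversion so that $\bigcup_i W_i=H$) is exactly the standard proof of that cited lemma, so the approach is essentially the same as the one the paper relies on.
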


Let $m$ be the maximum of indices of $C_H(x)$ in $H$ where $x\in X$. Obviously, we have $m\leq n$. 
\begin{lemma}\label{23} For any $x\in X$ the subgroup $[H,x]$ has $m$-bounded order.
\end{lemma}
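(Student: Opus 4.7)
The plan is to apply Proposition~\ref{rDS} with $K=\langle x\rangle$ inside $H$. Since $\langle x\rangle$ consists of powers of $x$, each of them either trivial or a $\pi$-element, every $y\in \langle x\rangle$ satisfies $|y^H|\le m$ by the definition of $m$, so $Pr(\langle x\rangle,H)\ge 1/m$. The proposition then produces a normal subgroup $T\triangleleft H$ and a subgroup $B=\langle x^d\rangle\le \langle x\rangle$ such that $[H:T]$, $d$, and $|[T,x^d]|$ are all $m$-bounded.

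Next, I would enlarge $d$ (still within $m$-bounded limits) to a multiple of the order of $x$ modulo $T$ so that $x^d\in T$; this does not increase $|[T,x^d]|$, because $[T,x^d]$ is normalized by $x^d$. Now $[T,x^d]$ is normal in $T$, and since the $H$-conjugacy class of the $\pi$-element $x^d$ has at most $m$ elements, the normal closure $N=\langle [T,x^d]^H\rangle$ is the product of at most $m$ conjugates $[T,x^d]^{h_i}=[T,(x^d)^{h_i}]$, each of which is normal in $T$ because $T\triangleleft H$. Hence $|N|$ is $m$-bounded and $N\triangleleft H$.

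In the quotient $\bar H=H/N$ one has $[\bar T,\bar x^d]=1$, so $\bar x^d$ is central in $\bar T$. Because $[\bar H:\bar T]$ and $[\bar H:C_{\bar H}(\bar x)]\le m$ are both $m$-bounded, the core of $C_{\bar H}(\bar x)$ in $\bar H$ has $m$-bounded index and centralizes $\bar L=\langle \bar x^{\bar H}\rangle$. Thus $[\bar L:Z(\bar L)]$ is $m$-bounded, and Schur's theorem yields an $m$-bound on $|\bar L'|$. Since $\bar L/[\bar H,\bar x]$ is cyclic one has $\bar L'\le [\bar H,\bar x]\le \bar L$, and combining this with the extra structural fact that $\bar x^d$ centralizes the bounded-index subgroup $\bar T$, one bounds the abelian quotient $[\bar H,\bar x]/\bar L'$, and hence $|[\bar H,\bar x]|$, in terms of $m$. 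Lifting through $N$ gives $|[H,x]|\le |N|\cdot |[\bar H,\bar x]|$, an $m$-bound.

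The main obstacle is the last step: controlling the abelian quotient $[\bar H,\bar x]/\bar L'$. One has to leverage the relation $\prod_{i=0}^{d-1}[\bar h,\bar x]^{\bar x^i}=1$ (valid for $\bar h\in \bar T$ because $[\bar T,\bar x^d]=1$) together with the $m$-bounded orbit of $\bar x$ in $\bar L/\bar L'$ to pin down the exponent and rank of this quotient. The transition from the local bound $|[T,x^d]|$ to the global bound on $|[H,x]|$, requiring the simultaneous passage from $T$ to $H$ and from $x^d$ to $x$, is where the $m$-boundedness of conjugacy classes of all $\pi$-elements (not just $x$) is essential.
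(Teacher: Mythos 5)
There is a genuine gap, and it sits exactly where you flag the ``main obstacle.'' Everything up to that point (applying Proposition~\ref{rDS} to $K=\langle x\rangle$, arranging $x^d\in T$, forming the $m$-bounded normal closure $N$, and bounding $|\bar L'|$ via the core of $C_{\bar H}(\bar x)$ and Schur's theorem) is essentially sound, but it only uses the boundedness of the classes of the elements of $\langle x\rangle$ and its conjugates. That information alone cannot yield the conclusion: in the infinite dihedral group $H=\langle a,t\mid t^2=1,\ a^t=a^{-1}\rangle$ the element $a$ has exactly two conjugates, $L=\langle a^H\rangle=\langle a\rangle$ is abelian (so $L'=1$), the core of $C_H(a)$ has index $2$, and one may take $T=\langle a\rangle$ with $[T,a]=1$ --- yet $[H,a]=\langle a^2\rangle$ is infinite. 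So the abelian quotient $[\bar H,\bar x]\bar L'/\bar L'$ that you still need to bound really can be infinite under all the constraints you have established, and the proposed relation $\prod_{i=0}^{d-1}[\bar h,\bar x]^{\bar x^i}=1$ does not help: it applies only to $\bar h\in\bar T$, and in the example it is vacuous ($[T,a]=1$) while the troublesome commutator $[t,a]=a^2$ comes from $t\notin T$. The hypothesis you correctly identify as ``essential'' --- that $H$ is generated by $\pi$-elements \emph{all} of which have at most $m$ conjugates --- never actually enters your argument, and without a concrete mechanism for exploiting it the final step is not a routine verification but the entire content of the lemma.

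For comparison, the paper's proof brings that hypothesis in at the very first step and thereby avoids the obstacle altogether: by Lemma~\ref{21} one chooses coset representatives $y_1,\dots,y_m$ of $C_H(x)$ with $l(y_i)\le m-1$, so that $[H,x]$ is generated by the $[y_i,x]$; expanding each $[y_i,x]$ by commutator identities shows $[H,x]\le T'$ where $T$ is generated by the ($m$-boundedly many) $H$-conjugates of $x$ and of the $\pi$-elements $y_{ij}$. Each generator of $T$ is a $\pi$-element with centralizer of index at most $m$ in $H$, so $[T:Z(T)]\le m^s$ and Schur's theorem bounds $|T'|$. If you want to salvage your approach, you would need an analogous device for writing arbitrary elements of $\bar H$ (not just of $\bar T$) as short products of small-class $\pi$-elements before attempting to control $[\bar H,\bar x]$.
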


\begin{proof} Take $x \in X$. Since the index of $C_H(x)$ in $H$ is at most $m$, by Lemma \ref{21}, we can choose elements $y_1,\ldots,y_m$ in $H$ such that $l(y_i)\leq m-1$ and the subgroup $[H,x]$ is generated by the commutators $[y_i,x]$, for $i=1,\ldots,m$. For any such $i$  write $y_i=y_{i1}\ldots y_{i(m-1)}$, with $y_{ij}\in X$. Using standard commutator identities we can rewrite $[y_i,x]$ as a product of conjugates in $H$ of the commutators $[y_{ij},x]$. Let $\{h_1,\ldots,h_s\}$ be the conjugates in $H$  of all elements from the set $\{x,y_{ij} \mid  1\leq i,j \leq m\}.$ Note that the number $s$ here is $m$-bounded. This follows form the fact that $C_H(x)$ has index at most $m$ in $H$ for each $x\in X$.  Put $T=\langle h_1,\ldots,h_s \rangle$. Since $[H,x]$ is contained in the commutator subgroup $T'$, it is sufficient to show that $T'$ has $m$-bounded order. Observe that the centre $Z(T)$ has index at most $m^s$ in $T$, since the index of $C_H(h_i)$ is at most $m$ in $H$ for any $i=1,\ldots,s$.  Thus, by Schur's theorem \cite[10.1.4]{Rob}, we conclude that the order of $T'$ is $m$-bounded, as desired. 
\end{proof}

Select $a\in X$ such that $|a^H|=m$. Choose $b_1,\ldots,b_m$ in $H$ such that $l(b_i)\leq m-1$ and $a^H=\{a^{b_i}; i=1,\ldots,m\}$. The existence of the elements $b_i$ is guaranteed by Lemma \ref{21}. Set $U=C_G(\langle b_1,\ldots,b_m\rangle)$. Note that the index of $U$ in $G$ is $n$-bounded. Indeed, since $l(b_i)\leq m-1$ we can write $b_i=b_{i1}\ldots b_{i(m-1)}$, where $b_{ij}\in X$ and $i=1,\ldots,m$. By the hypothesis the index of $C_G(b_{ij})$ in $G$ is at most $n$ for any such element $b_{ij}$. Thus, $[G:U]\leq n^{(m-1)m}$. 

The next result is somewhat analogous to \cite[Lemma 4.5]{wie}.
\begin{lemma}\label{24} If $u\in U$ and $ua\in X$, then $[H,u]\leq[H,a]$.
\end{lemma}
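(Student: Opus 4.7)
The plan is to exploit the maximality of $m=|a^H|$ together with the fact that $u\in U$ centralizes each of the $b_i$ in order to pin down the whole conjugacy class $(ua)^H$, and then to convert the resulting conjugation equations for $ua$ into commutator identities for $u$.

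First I would observe that since $u$ commutes with each $b_i$, we have $(ua)^{b_i}=u\,a^{b_i}$ for $i=1,\ldots,m$, which produces $m$ pairwise distinct conjugates of $ua$ in $H$ (distinctness is inherited from the $a^{b_i}$). Since $ua\in X$ by hypothesis and $m$ was chosen as the maximum of $[H:C_H(x)]$ over $x\in X$, this forces $|(ua)^H|\leq m$, whence in fact
\[
(ua)^H=\{u\,a^{b_i}\mid i=1,\ldots,m\}.
\]
In particular, for every $h\in H$ there exists an index $i$ with $(ua)^h=u\,a^{b_i}$.

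Next I would expand $(ua)^h=u^h a^h$ and solve for the commutator, obtaining
\[
[u,h]\;=\;u^{-1}u^h\;=\;a^{b_i}(a^h)^{-1}.
\]
Rewriting $a^{b_i}=a\,[a,b_i]$ and $a^h=a\,[a,h]$, this takes the form $[u,h]=a\bigl([a,b_i][a,h]^{-1}\bigr)a^{-1}$, which displays $[u,h]$ as the conjugate by $a^{-1}$ of the element $[a,b_i][a,h]^{-1}\in[H,a]$.

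The concluding step, which is the only mildly delicate point, is to verify that $[H,a]$ is stable under conjugation by $a$ (and hence by $a^{-1}$); this is immediate from the identity $[h,a]^a=[h^a,a]\in[H,a]$ applied to each defining generator. Once this is in hand, $[u,h]\in[H,a]$ for every $h\in H$, and since the elements $[u,h]$ generate $[H,u]$, the inclusion $[H,u]\leq[H,a]$ follows. I do not anticipate any serious obstacle beyond the bookkeeping of commutator identities and this small invariance verification.
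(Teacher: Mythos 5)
Your proof is correct and follows essentially the same route as the paper: use $u\in U$ to get $(ua)^{b_i}=ua^{b_i}$, invoke the maximality of $m$ to conclude $(ua)^H=\{ua^{b_i}\}$, and then extract $[u,h]=a^{b}(a^{h})^{-1}\in[H,a]$. The only difference is that you spell out why $a^{b}(a^{h})^{-1}$ lies in $[H,a]$ (via normality of $[H,a]$ in $H$), a step the paper leaves implicit.
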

\begin{proof} Assume that $u\in U$ and $ua\in X$. For each $i=1,\ldots,m$ we have $(ua)^{b_i}=ua^{b_i}$, since $u$ belongs to $U$. We know that $ua\in X$ so taking into account the hypothesis on the cardinality of the conjugacy class of $ua$ in $H$, we deduce that $(ua)^H$ consists  exactly of the elements $ua^{b_i}$, for $i=1,\ldots,m$. Thus, given an arbitrary element $h\in H$, there exists $b\in \{b_1,\ldots,b_m\}$ such that $(ua)^h=ua^b$ and so $u^ha^h=ua^b$. It follows that $[u,h]=a^ba^{-h}\in[H,a]$, and the result holds.
\end{proof}

\begin{lemma}\label{derivedbounded}
The order of the commutator subgroup of $H$ is $n$-bounded.
\end{lemma}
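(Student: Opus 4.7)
The aim is to bound $|[H,H]|$ in terms of $n$. My plan is to funnel all commutator activity into the bounded subgroup $[H,a]$ from Lemma~\ref{23}, then close with Schur's theorem, aided by Proposition~\ref{rDS}.

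First I would introduce $V_1:=V\cap C_G(a)$. Since $[V:V_1]\le[H:C_H(a)]=m\le n$ and $[H:V]$ is $n$-bounded, $V_1$ has $n$-bounded index in $H$; after replacing it by its core in $H$ we may assume $V_1\trianglelefteq H$. Set $L:=\langle V_1\cap X\rangle$; normality of $V_1$ together with the $H$-invariance of $X$ make $L$ normal in $H$. For $v\in V_1\cap X$, $v$ and $a$ are commuting $\pi$-elements, so $va\in X$, and Lemma~\ref{24} gives $[H,v]\le[H,a]$. Since $V_1\le C_H(a)$ normalizes $[H,a]$ (as $[h,a]^v=[h^v,a]\in[H,a]$), the identity $[h,v_1v_2]=[h,v_2][h,v_1]^{v_2}$ extends this by induction on word length to all of $L$, so $[H,L]\le[H,a]$, which is of $n$-bounded order by Lemma~\ref{23}.

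Hence every $l\in L$ satisfies $|l^H|\le|[H,l]|\le|[H,L]|$, so $Pr(L,H)\ge 1/|[H,L]|$ is $n$-bounded below. Proposition~\ref{rDS} with $K=L$ and $G=H$ then furnishes a normal subgroup $T\trianglelefteq H$ and a subgroup $B\le L$ with $[H:T]$, $[L:B]$ and $|[T,B]|$ all $n$-bounded. Moreover, Neumann's BFC theorem applied to $L$ (whose elements have $L$-conjugacy classes inheriting the bound from $H$) gives $|L'|$ $n$-bounded.

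To finish, set $N:=[H,L]\cdot L'\cdot[T,B]$, of $n$-bounded order, and pass to $\bar H:=H/N$. There $\bar L\le Z(\bar H)$, $\bar L$ is abelian, and $\bar T$ centralizes $\bar B$, a subgroup of $\bar L$ of $n$-bounded index. Combining these with $[H:T]$ being $n$-bounded, together with a transversal of $V_1$ in $H$ chosen via Lemma~\ref{21} whose members are short $\pi$-products (each with $n$-bounded $[H,\cdot]$, as in the proof of Lemma~\ref{23}), one extracts a central subgroup of $\bar H$ of $n$-bounded index. Schur's theorem then yields $|\bar H'|$ $n$-bounded, whence $|H'|$ is $n$-bounded. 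The hardest step is this last assembly, since there is an apparent circularity: any element of $V_1\setminus L$ has its $\pi'$-part lying in $H'$, so bounding $[H,v]$ directly for such $v$ would require the very bound we are establishing. The role of Proposition~\ref{rDS} is precisely to break this circularity by furnishing an independent bounded-index normal subgroup $T$ of $H$ to which Schur's theorem can then be applied.
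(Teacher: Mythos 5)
The first half of your argument is sound and close in spirit to the paper's: with $V$ read as the subgroup $U=C_G(\langle b_1,\dots,b_m\rangle)$ (you never define $V$), the $\pi$-elements of $U\cap C_G(a)$ do commute with $a$, so Lemma~\ref{24} applies and $[H,L]\le [H,a]$ has $n$-bounded order. The fatal gap is in the final ``assembly''. Everything you have centralized modulo a bounded normal subgroup is $L=\langle V_1\cap X\rangle$, and there is no control whatsoever on $[H:L]$: although $V_1\cap H$ has $n$-bounded index in $H$, the subgroup generated by the $\pi$-elements \emph{lying in} $V_1$ can have unbounded index, since a bounded-index subgroup of $H$ need not be generated by $\pi$-elements even though $H$ is. Consequently neither route you sketch closes the argument. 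Proposition~\ref{rDS} gives $T\trianglelefteq H$ of bounded index with $[T,B]$ bounded for some $B\le L$ of bounded index \emph{in $L$}; to apply Schur to $T$ you would need $[T:Z(T)]$ bounded, but $B\cap T$ may have unbounded index in $T$ for exactly the reason above. The transversal trick controls only the finitely many coset representatives $g_i$ (which are short products of $\pi$-elements), not the elements of $(V_1\cap H)\setminus L$, which need not be products of boundedly many $\pi$-elements and whose contribution to $H'$ is therefore exactly the quantity you are trying to bound. Your closing remark about ``breaking the circularity'' names the problem but does not solve it.

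The paper closes this gap by a different mechanism, which is the real content of the lemma. It takes the normal core $U_0$ of $U$ in $G$ and the normal closure $T=\langle [H,a]^G\rangle$ (of bounded order since $|a^G|\le n$ and Lemma~\ref{23} bounds each $[H,a^{g_i}]$). Working modulo $T$, it shows $[U_0,a]\le Z(H)$, and then, via a nilpotency argument ($[b,a]$ central forces $\langle a,b\rangle$ nilpotent, hence $ba\in X$), that \emph{every} $\pi$-element of $U_0$ lies in $Z(H)$ --- not merely those commuting with $a$. It then passes to $G/Z(H)$: the image of $U_0$ there is a $\pi'$-group of bounded index, so every Sylow $p$-subgroup with $p\in\pi$ has bounded order and, by Lemma~\ref{dits}, bounded normal closure; since $H$ is generated by $\pi$-elements, $|H/Z(H)|$ is $n$-bounded and Schur's theorem applies to $H$ itself. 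Note that Proposition~\ref{rDS} plays no role in this lemma (it is used later, in the abelian case of Theorem~\ref{main1}). You would need to replace your last paragraph with something like this $Z(H)$-quotient argument for the proof to go through.
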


\begin{proof} Let $U_0$ be the maximal normal subgroup of $G$ contained in $U$. Recall that, by the remark made before Lemma \ref{24}, $U$ has $n$-bounded index in $G$. It follows that the index $[G:U_0]$ is $n$-bounded as well. 

By the hypothesis $a$ has at most $n$ conjugates in $G$, say $\{a^{g_1},\ldots,a^{g_n}\}$. Let $T$ be the normal closure in $G$  of  the subgroup $[H,a]$. Note that  the subgroups $[H,a^{g_i}]$ are normal in $H$, therefore  $T=[H,a^{g_1}]\ldots [H,a^{g_n}]$. By Lemma \ref{23} each of the subgroups $[H,a^{g_i}]$ has $n$-bounded order.  We conclude that the order of $T$ is $n$-bounded. 

Let $Y=Xa^{-1}\cap U$. Note that for any $y\in Y$ the product $ya$ belongs to $X$.  Therefore, by Lemma \ref{24}, for any $y\in Y$, the subgroup $[H,y]$ is contained in $[H,a]$. Thus,
\begin{eqnarray}\label{111} [H,Y]\leq T.
\end{eqnarray} 

Observe that for any $u\in U_0$ the commutator $[u,a^{-1}]=a^ua^{-1}$ lies in $Y$ and so 
\begin{eqnarray}\label{112} 
[H,[U_0,a^{-1}]]\leq [H,Y]. 
\end{eqnarray} 

Since $[U_0,a^{-1}]=[U_0,a]$, we deduce from (\ref{111}) and (\ref{112}) that
\begin{eqnarray}\label{222} [H,[U_0,a]]\leq T.
\end{eqnarray}

Since $T$ has $n$-bounded order, it is sufficient to show that the derived group of the quotient $H/T$ has finite $n$-bounded order. We pass now to the quotient $G/T$ and for the  sake of simplicity the images of $G$, $H,U,U_0,X$ and  $Y$ will be denoted by the same symbols. Note that by (\ref{111}) the set $Y$ becomes central in $H$ modulo $T$. Containment (\ref{222}) shows that $[U_0,a]\leq Z(H)$.  This implies that if $b\in U_0$ is a $\pi$-element, then $[b,a] \in Z(H)$ and the subgroup $\langle a,b\rangle$ is nilpotent. Thus the product $ba$ is a $\pi$-element too and so $b\in Y$. Hence, all $\pi$-elements of $U_0$ are contained in $Y$ and, in view of (\ref{111}), we deduce that they are contained in $Z(H)$. 

Next we consider the quotient $G/Z(H)$. Since the image of $U_0$ in $G/Z(H)$ is a $\pi'$-group and has $n$-bounded index in $G$, we deduce that the order of any $\pi$-subgroup in $G/Z(H)$ is $n$-bounded. In particular, there is an $n$-bounded constant $C$ such that for every $p\in\pi$ the order of the Sylow $p$-subgroup of $G/Z(H)$ is at most $C$. Because of Lemma \ref{dits} for any $p\in \pi$  each Sylow $p$-subgroup of $G/Z(H)$ is contained in a normal subgroup of $n$-bounded order. We deduce that all such Sylow subgroups of $G/Z(H)$ are contained in a normal subgroup of $n$-bounded order. Since $H$ is generated by $\pi$-elements, it follows that the order of $H/Z(H)$ is $n$-bounded. Thus, in view of Schur's theorem \cite[10.1.4]{Rob}, we conclude that $|H'|$ is $n$-bounded, as desired.
\end{proof}

We will now complete the proof of Theorem \ref{main1}. 

\begin{proof}

Assume first that $H$ is abelian. In this case the set $X$ of $\pi$-elements is a subgroup, that is, $X=H$. By the hypothesis we have  $|x^G|\leq n$ for any element $x\in H$ and so the relative commutativity degree $Pr(H,G)$ of $H$ in $G$ is at least $\frac{1}{n}$. Thus, by virtue of Proposition \ref{rDS}, there is a normal subgroup $T\leq G$ and a subgroup $B\leq H$ such that the indexes $[G:T]$ and $[H:B]$, and the order of the commutator subgroup $[T,B]$ are $n$-bounded. 

Since $H$ is a normal $\pi$-subgroup and $[G:H]$ is a $\pi'$-number, by the Schur--Zassenhaus Theorem \cite[Theorem 6.2.1]{go} the subgroup $H$ admits a complement $L$ in $G$ such that $G=HL$ and $L$ is a $\pi'$-subgroup.  Set $T_0=T\cap L$. Observe that the index $[L:T_0]$ is $n$-bounded since it is at most the index of $T$ in $G$. Thus we deduce that the index of $HT_0$ is $n$-bounded in $G$, as well.  

We claim that the order of $[H,T_0]$ is $n$-bounded.  Indeed, the $\pi'$-subgroup $T_0$ acts coprimely on the the abelian  $\pi$-subgroup $B_1=B[B,T_0]$, and so we have $B_1=C_{B_1}(T_0)\times [B_1,T_0]$ (\cite[Corollary 1.6.5]{Kh}). Note that $[B_1,T_0]=[B,T_0]$. Since the oder of  $[B,T_0]$ is $n$-bounded (being at most the order of $[T,B]$),  we deduce that the index $[B_1:C_{B_1}(T_0)]$ is $n$-bounded. In combination with the fact that $[H:B]$ is $n$-bounded, we obtain that the index $[H:C_{B_1}(T_0)]$ is $n$-bounded and so in particular $[H:C_{H}(T_0)]$ is $n$-bounded. Since $T_0$ acts coprimely on  the abelian normal  $\pi$-subgroup $H$, we have $H=C_{H}(T_0)\times [H,T_0]$. Thus we obtain that the order of the commutator subgroup $[H,T_0]$ is $n$-bounded, as claimed. Let $T_1=C_{T_0}([H,T_0])$ and remark that the index $[T_0:T_1]$ of $T_1$ in $T_0$ is $n$-bounded too. Set $N=HT_1$. From the fact that the indexes $[T_0:T_1]$ and $[G:HT_0]$ are both $n$-bounded,  we deduce that  the index of $N$ in $G$ is $n$-bounded, as well.  

Note that $N$ is normal in $G$ since the image of $N$ in $G/H\cong L$ is isomorphic to $T_1$ which is normal in $L$. Furthermore,  we have $[H,T_1,T_1]=1$, since $T_1=C_{T_0}([H,T_0])$. Hence by the standard properties of coprime actions we have $[H,T_1]=1$ (\cite[Corollary 1.6.4]{Kh}). Therefore $[H,N]=1$. This proves the theorem in the particular case where $H$ is abelian.

In the general case, in view of Lemma \ref{derivedbounded}, the commutator subgroup $[H,H]$ is of $n$-bounded order. We pass to the quotient $\overline{G}=G/[H,H]$. The above argument shows that $\overline{G}$ has a normal subgroup $\overline{N}$ of $n$-bounded index such that $\overline{H}\leq Z(\overline{N})$. Here $Z(\overline{N})$ stands for the centre of $\overline{N}$. Let $N$ be the inverse image of $\overline{N}$. We have $[H,N]=[H,H]$ and so $N$ has the required properties. The proof is now complete.  
\end{proof}

\section{Proof of Theorem \ref{main2}}

We will require the following result taken from \cite[Lemma 4.1]{AS}.

\begin{lemma}\label{lemmaij}
Let $G$ be a locally nilpotent group containing an element with finite centralizer. Suppose that $G$ is residually finite. Then $G$ is finite.
\end{lemma}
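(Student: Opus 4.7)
The plan is a three-stage reduction. First, I show $G$ is periodic. Since $\langle x\rangle \subseteq C_G(x)$ is finite, the element $x$ has finite order. For any $y \in G$, the subgroup $H := \langle x, y\rangle$ is finitely generated and nilpotent by local nilpotency; its torsion subgroup $T(H)$ is finite and $H/T(H)$ is torsion-free nilpotent. Because $x$ has finite order, its image in $H/T(H)$ is trivial, so $[H, x] \subseteq T(H)$ is finite. The map $h \mapsto [h, x]$ factors through an injection of sets $H/C_H(x) \hookrightarrow [H, x]$, giving $[H : C_H(x)] \leq |T(H)| < \infty$. Combined with $|C_H(x)| \leq |C_G(x)| < \infty$, this forces $|H| < \infty$, so $y$ has finite order, and $G$ is periodic.

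Next, by a classical theorem of Chernikov on periodic locally nilpotent groups, $G$ decomposes as the restricted direct product $G = \mathrm{Dr}_p G_p$ of its Sylow $p$-subgroups. Writing $x = \prod_p x_p$ with $x_p = 1$ for all but finitely many $p$: for each prime $p$ not dividing the order of $x$ we have $G_p \subseteq C_G(x)$, which forces $G_p$ to be finite, and since $|C_G(x)| < \infty$ only finitely many such Sylow factors can be nontrivial. After absorbing the finite direct factors, the problem reduces to the case where $G$ equals a single Sylow $p$-subgroup: a residually finite, locally finite $p$-group in which $|C_G(x)| \leq n$.

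The last and most delicate step is to conclude in this reduced case, which is the principal obstacle. Since $|G| = |C_G(x)| \cdot |x^G|$, it suffices to bound $|x^G|$. For each normal subgroup $N$ of $G$ of finite index, set $D_N := \{g \in G : [g, x] \in N\}$; standard commutator identities together with normality of $N$ show that $D_N$ is a subgroup containing $C_G(x)$, and $[G : D_N]$ coincides with the conjugacy class size of $xN$ in the finite $p$-group $G/N$. Residual finiteness yields $\bigcap_N D_N = C_G(x)$, so $[G:C_G(x)]$ is the supremum of the finite indices $[G:D_N]$. The plan is to prove that $[G : D_N]$ is bounded uniformly in $N$ by a function of $n$, which would yield $[G : C_G(x)] < \infty$ and hence $G$ finite. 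This uniform bound is the heart of the proof; I expect it follows by combining the pointwise centralizer bound $|Z(H)| \leq |C_H(x)| \leq n$ on every finite $p$-subgroup $H$ of $G$ containing $x$ with an inverse-limit argument invoking Schur's theorem for central-by-finite groups.
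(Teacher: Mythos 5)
Your Steps 1 and 2 are correct (the periodicity argument via the finite torsion subgroup of $\langle x,y\rangle$ is fine, and the primary decomposition of a periodic locally nilpotent group is standard), but the argument collapses exactly where you say it does: Step 3 is not a proof but a declaration of intent (``the plan is to prove\dots'', ``I expect it follows\dots''). Worse, the ingredients you propose for closing it cannot work. You hope to bound $[G:D_N]=|(xN)^{G/N}|$ uniformly in $n$ using only the bound $|C_H(x)|\leq n$ on finite $p$-subgroups $H$ containing $x$, together with $|Z(H)|\leq n$ and Schur's theorem. But Schur's theorem needs the centre to have finite \emph{index}, not bounded \emph{order}, so it gives nothing here; and there is no Brauer--Fowler phenomenon inside the class of finite $p$-groups: in the dihedral group $D_{2^k}$ a reflection $s$ has $|C(s)|=4$ while $|s^{D_{2^k}}|=2^{k-2}$ is unbounded. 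Every finite $2$-subgroup of the locally finite, locally nilpotent group $\mathbb{Z}(2^\infty)\rtimes C_2$ containing the inverting involution satisfies your pointwise hypotheses with $n=4$, yet the conjugacy class sizes blow up. So no uniform bound on $[G:D_N]$ can be extracted from those local data alone; any correct completion must exploit residual finiteness in a more essential way than taking $\bigcap_N D_N$.

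For comparison, the paper does not prove the lemma itself but imports it from \cite[Lemma 4.1]{AS}, and the argument there is short and avoids your entire reduction. Since $C_G(x)$ is finite and $G$ is residually finite, choose a normal subgroup $N$ of finite index with $N\cap C_G(x)=1$. If $N\neq 1$, pick $1\neq g\in N$ and set $K=\langle g,x\rangle$, which is nilpotent by local nilpotency. Then $N\cap K$ is a nontrivial normal subgroup of $K$, hence meets $Z(K)$ nontrivially; but $Z(K)\leq C_K(x)\leq C_G(x)$, contradicting $N\cap C_G(x)=1$. Hence $N=1$ and $G$ is finite. Note how this uses residual finiteness once, globally, to manufacture a subgroup on which $x$ acts without nontrivial fixed points, and then uses only the fact that nontrivial normal subgroups of nilpotent groups meet the centre --- precisely the mechanism that is absent from (and cannot be simulated by) your quotient-by-quotient bounding strategy.
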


Profinite groups have Sylow $p$-subgroups and satisfy analogues of the Sylow theorems.  Prosoluble groups satisfy analogues of the theorems on Hall $\pi$-subgroups. We refer the reader to the corresponding chapters in \cite[Ch.~2]{rz} and \cite[Ch.~2]{wil}.

Recall that an automorphism $\phi$ of a group $G$ is called fixed-point-free if $C_G(\phi)=1$, that is, the fixed-point subgroup is trivial. It is a well-known corollary of the classification of finite simple groups that if $G$ is a finite group admitting a fixed-point-free automorphism, then $G$ is soluble (see for example \cite{row} for a short proof). A continuous automorphism $\phi$ of a profinite group $G$ is coprime if for any open $\phi$-invariant normal subgroup $N$ of $G$ the order of the automorphism of $G/N$ induced by $\phi$ is coprime to the order of $G/N$. It follows that if a profinite group $G$ admits a coprime fixed-point-free automorphism, then $G$ is prosoluble. This will be used in the proof of Theorem \ref{main2}.

\begin{proof}[Proof of Theorem \ref{main2}] Recall that $\pi$ is a set of primes and $G$ is a profinite group in which the centralizer of every $\pi$-element is either finite or open. We wish to show that $G$ has an open subgroup of the form $P\times Q$, where $P$ is an abelian pro-$\pi$ subgroup and $Q$ is a pro-$\pi'$ subgroup. 

Let $X$ be the set of $\pi$-elements in $G$. Consider first the case where the conjugacy class $x^G$ is finite for any $x\in X$. For each integer $i\geq 1$ set $$S_i=\{x\in X;\ |x^G|\leq i\}.$$ The sets $S_i$ are closed.  Thus, we have countably many sets which cover the closed set $X$. By the Baire Category Theorem \cite[Theorem 34]{kel} at least one of these sets has non-empty interior. It follows that there is a positive integer $k$, an open normal subgroup $M$, and an element $a\in X$ such that all elements in $X\cap aM$ are contained in $S_k$.

Note that $\langle a^G\rangle$ has finite commutator subgroup, which we will denote by $T$. Indeed, the subgroup $\langle a^G\rangle $ is generated by finitely many elements whose centralizer is open. This implies that the centre of $\langle a^G\rangle $ has finite index in $\langle a^G\rangle $, and by Schur's theorem \cite[10.1.4]{Rob}, we conclude that $T$ is finite, as claimed. 

Let $x\in X\cap M$. Note that the product $ax$ is not necessarily in $X$. On the other hand, $ax$ is a $\pi$-element modulo $T$. This is because $\langle a^G\rangle $ becomes an abelian normal $\pi$-subgroup modulo $T$ and the image of $ax$ in the quotient $G/\langle a^G\rangle $ is a $\pi$-element. In other words, there are $y\in X\cap aM$ and $t\in T$ such that $ax=ty$. Observe that $t$ has an open centralizer in $G$ since $t\in T$. In fact $[G:C_G(t)]\leq|T|$. From the equality $ax=ty$ deduce that $|x^G|\leq k^2|T|$. This happens for any $x\in X\cap M$. Using a routine inverse limit argument in combination with Theorem \ref{main1} we obtain that $M$ has an open normal subgroup $N$ such that the index $[M:N]$ and the order of $[H,N]$ are finite. Here $H$ stands for the subgroup generated by all $\pi$-elements of $M$. Choose an open normal subgroup $U$ in $G$ such that $U\cap[H,N]=1$. Then $U\cap M$ is an open normal  subgroup of the form $P\times Q$, where $P$ is an abelian pro-$\pi$ subgroup and $Q$ is a pro-$\pi'$ subgroup. This proves the theorem in the case where all $\pi$-elements of $G$ have open centralizers.

Assume now that $G$ has a $\pi$-element, say $b$, of infinite order. Since the procyclic subgroup $\langle b\rangle$ is contained in the centralizer $C_G(b)$, it follows that $C_G(b)$ is open in $G$. This implies that all elements of $X\cap C_G(b)$ have open centralizers (because they centralize the procyclic subgroup $\langle b\rangle$). In view of the above $C_G(b)$ has  an open subgroup of the form $P\times Q$, where $P$ is an abelian pro-$\pi$ subgroup and $Q$ is a pro-$\pi'$ subgroup and we are done.
  
We will therefore assume that $G$ is infinite while all $\pi$-elements of $G$ have finite orders and there is at least one $\pi$-element, say $d$, such that $C_G(d)$ is finite. The element $d$ is a product of finitely many $\pi$-elements of prime power order. At least one of these elements must have finite centralizer. So without loss of generality we can assume that $d$ is a $p$-element for a prime $p\in\pi$. 

Let $P_0$ be a Sylow $p$-subgroup of $G$ containing $d$. Since $P_0$ is torsion, we deduce from Zelmanov's theorem \cite{zelmanov} that $P_0$ is locally nilpotent. The centralizer  $C_G(d)$ is finite and so in view of Lemma \ref{lemmaij} the subgroup $P_0$ is finite. Choose an open normal pro-$p'$ subgroup $L$ such that $L\cap C_G(d)=1$. Note that any finite homomorphic image of $L$ admits a coprime fixed-point-free automorphism (induced by the coprime action of $d$ on $L$). Hence $L$ is prosoluble. Let $K$ be a Hall $\pi$-subgroup of $L$. Since any element in $K$ has restricted centralizer, Shalev's result \cite{shalev} shows that $K$ is virtually abelian. We therefore can choose  an open normal subgroup $J$ in $L$ such that $J\cap K$ is abelian. If $J\cap K$ is finite then $G$ is virtually pro-$\pi'$ and we are done. If $J\cap K$ is infinite, then all $\pi$-elements of $J$ have infinite centralizers. This yields that  all $\pi$-elements of $J$ have open centralizers in $J$ and in view of the first part of the proof, $J$ has an open normal subgroup of the form $P\times Q$, where $P$ is an abelian pro-$\pi$ subgroup and $Q$ is a pro-$\pi'$ subgroup. This establishes the theorem.
\end{proof}

\end{document}